\newtheorem{thm}{Theorem}[section]
\newtheorem{lem}[thm]{Lemma}
\newtheorem{fact}[thm]{Fact}
\theoremstyle{definition}
\theoremstyle{remark}
\newtheorem{rem}[thm]{Remark}
\newcommand{\R}{\mathbb{R}}
\newcommand{\To}{\rightarrow}
\newcommand{\MTo}{\mapsto}
\title[Regularity of the geodesic flow on submanifolds]{Regularity of the geodesic flow \\ on submanifolds}
\author[C. Lange]{Christian Lange}
\address{Ludwig-Maximilians-Universit\"at M\"unchen, Mathematisches Institut\newline\indent Theresienst. 39, 80333 München, Germany}
\email{lange@math.lmu.de, clange.math@gmail.com}
\begin{document}

\begin{abstract}
We show that the geodesic flow and the exponential map of a $\mathcal{C}^k$ submanifold of $\R^n$ with $k\geq 2$ are of class $\mathcal{C}^{k-1}$.
\end{abstract}

\maketitle

\section{Introduction}

The geodesic flow of a Riemannian manifold is a widely studied dynamical system with interesting properties. %, see e.g. \cite{Kl95,Pa99}.
It is closely related to the exponential map, which is an important tool in Riemannian geometry and its applications. %, see e.g. \cite{dC92,MMS14}.
For a smooth Riemannian manifold both the geodesic flow and the exponential map are smooth maps by standard ODE theory. However, sometimes only less regularity is available. Properties related to geodesics of general Riemannian metrics with limited regularity have been studied by several authors, see e.g. \cite{AY06,KSS14,Min15,ST18}. Here we are concerned with the existence and regularity of the geodesic flow and the exponential map for submanifolds of limited regularity, cf. \cite{Ha50}.

For a Riemannian metric $g$ of class $\mathcal{C}^{k}$ on some manifold $M$ the Christoffel symbols, which appear as coefficients of the corresponding geodesic equation, are of class $\mathcal{C}^{k-1}$. For $k\geq 2$ the Christoffel symbols are in particular Lipschitz and then solutions of the geodesic equation with given initial conditions exist uniquely by the Picard--Lindelöf theorem \cite{Ha82}. Moreover, they define constant speed locally length minimizing curves. In this case their maximal extensions determine the geodesic flow $\phi: V\subset \R \times TM \rightarrow TM$ and the exponential map $\exp: W \subset TM  \rightarrow M$, $\exp(v)=\phi(1,v)$, of $g$, both of which are of class $\mathcal{C}^{k-1}$ by Peano's theorem on the dependence on initial conditions \cite[Theorem~V.4.1]{Ha82}. For a $\mathcal{C}^{1,1}$ metric solutions of the geodesic equation still give rise to a locally Lipschitz geodesic flow and are locally length minimizing among absolutely continuous curves \cite{Min15}, cf. \cite{KSS14}. In lower regularity these statements fail \cite{ST18}. Moreover, in general the regularity of the geodesic flow and the exponential map is not better than that \cite[Example~2.3]{DK81}\cite{Sa93}.

On the other hand, we show that there is a well-defined geodesic flow on a $\mathcal{C}^{2}$ \emph{sub}manifold $M$ of $\R^n$ (with $\mathcal{C}^{1}$ metric) defined in terms of constant speed locally length minimizing curves, and that the geodesic flow of a $\mathcal{C}^{k}$ submanifold of $\R^n$ for $k\geq 2$ is as regular as the tangent bundle $TM$ on which it is defined on.

\begin{thm}\label{thm:regularity_gf} Let $M$ be a $\mathcal{C}^{k}$ submanifold of $\R^n$ for some $k\geq 2$. Then for any $v \in TM$ there exists a unique maximal locally length minimizing solution $\gamma_v$ of the geodesic equation with $\gamma'(0)=v$. Let $\phi: V \rightarrow TM$, $V \subset \R \times TM$, $\phi(t,v):= \gamma_v'(t)$ be the geodesic flow defined at those $(t,v) \in \R \times TM$ for which $\gamma_v$ extends up to time $t$. Then $V \subset \R \times TM$ is open and $\phi$ is of class $\mathcal{C}^{k-1}$. In particular, also the exponential map $\exp:  W\subset TM \rightarrow M$, $W= \{1\} \times TM \cap V$, is of class $\mathcal{C}^{k-1}$.
\end{thm}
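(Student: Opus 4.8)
The plan is to work extrinsically throughout and to exploit that, although the metric induced on a $\mathcal{C}^k$ submanifold is only $\mathcal{C}^{k-1}$, the coefficients of the geodesic equation are derivatives of an object that is itself one order more regular. Write $P(p)$ and $N(p)=\mathrm{Id}-P(p)$ for the orthogonal projections of $\R^n$ onto $T_pM$ and $N_pM$. In a $\mathcal{C}^k$ chart $\psi$ these are algebraic expressions in the first derivatives $\partial_a\psi$, so $p\mapsto P(p)$ and $p\mapsto N(p)$ are of class $\mathcal{C}^{k-1}$. A constant speed curve $\gamma$ in $M$ is locally length minimizing exactly when its ambient acceleration is normal, $P(\gamma)\gamma''=0$; differentiating the tangency relation $N(\gamma)\gamma'=0$ turns this into the closed second order equation
\[
 \gamma''=-\big(DN(\gamma)\gamma'\big)\gamma'=:\mathrm{II}_\gamma(\gamma',\gamma').
\]
For $k\ge 3$ the right hand side is locally Lipschitz and Picard--Lindel\"of gives existence and uniqueness of $\gamma_v$; for $k=2$ it is only continuous, and I would instead take uniqueness from local length minimization and check that minimizers solve the displayed equation. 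A bootstrap in the equation then shows each $\gamma_v$ is $\mathcal{C}^k$ in $t$, and maximality together with continuous dependence yields that $V$ is open.

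The heart of the matter is the dependence of $\phi$ on the initial vector $v$. The geodesic spray $(p,v)\mapsto(v,\mathrm{II}_p(v,v))$ on $TM$ involves $DN$, hence is only of class $\mathcal{C}^{k-2}$, so a direct appeal to Peano's theorem loses precisely one derivative relative to the asserted $\mathcal{C}^{k-1}$. This gain of one derivative is the main obstacle, and it is genuinely delicate: the variational equation for $D\phi$ has coefficients built from $\partial_x\mathrm{II}=-\big(D^2N[\cdot]v\big)v$, which is only $\mathcal{C}^{k-3}$ and carries no obvious structure, so the naive differentiation route in fact appears to lose two derivatives. Any successful argument must therefore use the extrinsic geometry in an essential way rather than treating the spray as a generic $\mathcal{C}^{k-2}$ vector field.

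The lever I would try to exploit is the \emph{exact-derivative} form of the nonlinearity: along a solution $\mathrm{II}_\gamma(\gamma',\gamma')=-\big(DN(\gamma)\gamma'\big)\gamma'=-\tfrac{d}{dt}\big(N(\gamma)\big)\gamma'$, so that the only occurrence of a derivative of $N$ is contracted against the velocity $\dot\gamma=\gamma'$ and thus sits as a total time derivative. Combined with the projection identities $N^2=N$, $P+N=\mathrm{Id}$, and their differentiated form $N\,DN=DN\,P$, this should let one reformulate the flow on $TM\subset\R^{2n}$ in the $P/N$ splitting so that derivatives of $N$ are systematically absorbed into the already one-order-smoother position integral $\gamma(t)=x_0+\int_0^t\gamma'(s)\,ds$ and into boundary terms involving $N$ but not $DN$. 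The target is a fixed point scheme whose parameter dependence can be controlled in $\mathcal{C}^{k-1}$, so that the Banach space implicit function theorem delivers $\mathcal{C}^{k-1}$ dependence of $(\gamma_v,\gamma_v')$ on $v$. As a fallback I would run this through approximation, taking smooth submanifolds $M_j\to M$ in $\mathcal{C}^k_{\mathrm{loc}}$, for which $\phi_j$ is smooth, and extracting the limit from uniform $\mathcal{C}^{k-1}$ bounds on $\phi_j$.

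The step I expect to be the real difficulty is converting the exact-derivative structure into a quantitative gain at the level of the dependence on initial data, that is, showing that the \emph{variational} flow itself gains a derivative; a purely formal integration by parts on a fixed solution only reproduces the invariant relation $N(\gamma)\gamma'=0$ and is circular, so the gain must be organized carefully so as not to be reabsorbed by the quadratic remainder $N(\gamma)\big(DN(\gamma)\gamma'\big)\gamma'$. A secondary technical point is the passage between the ambient formulation and a $\mathcal{C}^{k-1}$ chart of $TM$, where one must confirm that the change of variables preserves the $\mathcal{C}^{k-1}$ conclusion. Once fibrewise $\mathcal{C}^{k-1}$ regularity is established, joint continuity in $(t,v)$ and the flow identity upgrade it to $\mathcal{C}^{k-1}$ regularity of $\phi$ on the open set $V$, and restricting to $t=1$ gives the same regularity for $\exp$.
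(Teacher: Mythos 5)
You have correctly isolated the central difficulty --- the geodesic spray of a $\mathcal{C}^k$ submanifold is only $\mathcal{C}^{k-2}$, so one must gain a derivative at the level of the variational equation --- but your proposal stops exactly where the proof has to start: you state that the coefficients of the variational equation, built from $\partial_x\mathrm{II}$, are ``only $\mathcal{C}^{k-3}$ and carry no obvious structure,'' and you do not supply a mechanism that recovers the loss. The missing idea is the Gauß equation. Along a geodesic, the derivative of the flow does not solve the generic variational ODE but the Jacobi equation, whose coefficient is the curvature term $R(\gamma',\cdot)\gamma'$; and for a submanifold of $\R^n$ the Gauß equation expresses $R$ \emph{algebraically} in the second fundamental form,
\[
\left\langle J'',W\right\rangle=\left\langle \Pi(V,V),\Pi(J,W)\right\rangle-\left\langle \Pi(J,V),\Pi(V,W)\right\rangle,
\]
so the coefficient involves only second derivatives of the chart and is of class $\mathcal{C}^{k-2}$, not $\mathcal{C}^{k-3}$: the terms carrying three derivatives of the embedding cancel identically because the ambient space is flat. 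This is precisely the structural gain you were hunting for with the ``exact-derivative'' reformulation; without it your fixed-point/implicit-function scheme has nothing quantitative to run on, as you yourself concede (``purely formal integration by parts \dots is circular''). With it, the case $k\geq 4$ is just Peano's theorem on differentiable dependence on initial conditions applied to the coupled geodesic--Jacobi system, and for $k=3$ one additionally needs a weak Schwarz-type theorem (Peano's version) to justify the interchanges of partial derivatives that produce the Jacobi equation in the first place --- an issue you flag only in passing.

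Your fallback via smooth approximations $M_j\to M$ also cannot close the argument as stated, because the ``uniform $\mathcal{C}^{k-1}$ bounds on $\phi_j$'' are exactly what needs proving: bounds on $D\phi_j$ obtained by differentiating the spray involve third derivatives of the approximating graphs, which are not uniformly controlled when the limit is only $\mathcal{C}^2$. The paper's $k=2$ proof does use approximation, but makes it work through the same Gauß-equation structure: the derivatives of $\phi_j$ solve Jacobi equations whose coefficients are quadratic in the second fundamental forms $\Pi_j$, hence equibounded and equicontinuous when the graphs converge in $\mathcal{C}^2$; and since these coefficients are merely continuous (not Lipschitz), equicontinuity of the solutions is extracted from Osgood's lemma applied to the linear Jacobi equation, not from Gronwall or Picard--Lindel\"of. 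Finally, uniqueness for $k=2$ needs more than ``take uniqueness from local length minimization'': the paper first shows that limits of minimizing geodesics of the approximations are minimizing (via uniform injectivity-radius bounds for the approximating submanifolds) and then invokes non-branching, which holds because a $\mathcal{C}^{1,1}$ submanifold of $\R^n$ has two-sided curvature bounds in the Alexandrov sense.
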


Under the assumptions of the theorem locally length minimizing constant speed curves on $M$ are of class $\mathcal{C}^{k}$ and satisfy the geodesic equation \cite{ST18}. In case of a $\mathcal{C}^{2}$ surface in $\R^3$ Hartman \cite[Theorem~1]{Ha50} moreover showed that solutions of the geodesic equations are uniquely determined by their initial conditions. Theorem \ref{thm:regularity_gf} generalizes a regularity result by Hartman for the exponential map with fixed base point of surfaces \cite[Theorem~2]{Ha50}. The example \cite[Example~2.3]{DK81}\cite{Ha51} combined with Nash's embedding theorem \cite{Na56} shows that Theorem \ref{thm:regularity_gf} is optimal in terms of the regularity of the geodesic flow and the exponential map. Moreover, by Nash's theorem the statement of Theorem \ref{thm:regularity_gf} also holds for $\mathcal{C}^{k}$ submanifolds, $k\geq 3$, of Riemannian manifolds with a Riemannian metric of class $\mathcal{C}^{k}$. Regularity statements are for instance relevant in the context of Taylor expansions, cf. \cite{MMS14}.

The proof of Theorem \ref{thm:regularity_gf} is based on the observation that the curvature of $M$ is more regular than on a general Riemannian manifold due to Gauß' equation, cf. \cite{AK21}. This observation combined with a regularity result for Riemannian metrics due to DeTurck--Kazdan readily implies that for $k\geq 3$ and $0<\alpha<1$ the geodesic flow and the exponential map of a $\mathcal{C}^{k,\alpha}$ submanifold $M$ of $\R^n$ are of class $\mathcal{C}^{k-1,\alpha}$. Namely, in this case they show that there are harmonic coordinates that depend in a $\mathcal{C}^{k,\alpha}$ way on the original coordinates \cite[Lemma~2.1]{DK81} with respect to which the Riemannian metric is of class $\mathcal{C}^{k,\alpha}$ \cite[Theorem~4.5]{DK81}. However, for $\alpha=0,1$ elliptic regularity theory cannot be applied and the statement about harmonic coordinates does not hold \cite{SS76}.

Instead, in these cases we use the fact that the derivatives of the exponential map can be expressed as solutions of Jacobi's equation whose coefficients are determined by the curvature. Standard ODE theory then implies that the derivatives of the exponential map, and then also the exponential map itself, inherit the enhanced regularity, see Section \ref{sub:jacobi_field}. However, expressing the derivatives of the exponential map as solutions of Jacobi's equation requires caution in low regularity where the usual assumptions of Schwarz' theorem about the interchangeability of partial derivatives can fail. For $k=3$ this issue can be handled with a version of Schwarz' theorem due to Peano, see Section \ref{sub:int_part_deri}.

For $k=2$ we instead work with the Jacobi equation of smooth approximations of our submanifold in order to avoid issues related to exchanging partial derivatives. This allows us to conclude that the geodesic flows of the approximating submanifolds subconverge in $\mathcal{C}^1$ to a flow on the limit $\mathcal{C}^2$ submanifold. Based on the local curvature bounds, we then show that locally length minimizing curves converge to locally length minimizing curves on the limit space, ensuring that the limit flow is actually a geodesic flow. Moreover, since locally length minimizing curves on the limit space cannot branch because of the curvature bound, also the uniqueness statement follows.

In a similar way one can complete a proof of the of following statement: for a $\mathcal{C}^{k,\alpha}$ submanifold $M$ of $\R^n$ with $k\geq 1$, $\alpha \in [0,1]$ and $k+\alpha \geq 2$ there is a unique (maximal) geodesic flow defined in terms of constant speed locally length minimizing curves and it is of class $\mathcal{C}^{k-1,\alpha}$, see Lemma \ref{lem:geo_lipschitz} and Remark \ref{rem:hoelder_case}. Alternatively, one could phrase this by saying that the Filippov geodesic equation has unique locally length minimizing solutions and that the geodesic flow defined in terms of these solutions is of class $\mathcal{C}^{k-1,\alpha}$, cf. the the remark after Lemma \ref{lem:geo_lipschitz}.

In even lower regularity, e.g. on a $\mathcal{C}^{1}$ submanifold, the geodesic flow is in general not everywhere defined let alone continuous. For instance, embedding a double of the strange billiard table constructed in \cite{Hal77} via Nash's $C^1$ embedding theorem \cite{Na54} into some $\R^n$ yields a $\mathcal{C}^1$ submanifold with curvature bounded from below in the Alexandrov sense which does not even admit a continuous quasigeodesic flow, cf. \cite{La22}.
\newline
\newline
\emph{Acknowledgements.} The author thanks Alexander Lytchak for helpful suggestions and comments that lead to a simpler proof. He thanks Alberto Abbondandolo for discussions about elliptic regularity.

\section{Preliminaries and proof in high regularity}\label{sec:high_reg}

In this section we recall some notions and prove Theorem \ref{thm:regularity_gf} for $k\geq 3$. For more background on Riemannian geometry and refer the reader to e.g. \cite{Wa83,dC92}.

\subsection{Geodesic flow and exponential map}\label{sub:geodesic_flow}
In the following, by a \emph{geodesic} we mean a constant speed locally length minimizing curve. If the Riemannian metric of $M$ is of class $\mathcal{C}^{1,1}$, then a  geodesic $\gamma$ on $M$ satisfies the geodesic equation $\nabla_{\gamma'(t)} \gamma'(t)=0$ \cite{ST18}. Recall that in a coordinate chart $(U,x=(x_1,...,x_n))$ in a neighborhood of $\gamma(t_0)$ the geodesic equation can be expressed as
\begin{equation}\label{eq:geodesic_equation}
	0=\nabla_t \partial_t \gamma_v(t)= \sum_{k=1}^n\left(\frac{d^2x_k}{dt^2}+ \sum_{i,j=1}^n\Gamma_{ij}^k \frac{dx_i}{dt}\frac{dx_j}{dt}\right) \frac{\partial}{\partial x^k}.
\end{equation}
Here $\Gamma_{ij}^k$ are the Christoffel symbols which are determined by the coordinate vector fields $\partial_k=\frac{\partial}{\partial x^k}$ and the Levi-Civita connection $\nabla$ of $M$ through the relation
\[
			\nabla_{\partial_i} \partial_j = \sum_{k=1}^n \Gamma_{ij}^k \partial_k,
\]
and which can be expressed in terms of the metric coefficients and its first derivatives, see e.g. \cite{dC92}. Any differentiable curve $\gamma$ in $M$ determines a curve $(\gamma(t),\gamma'(t))$ in $TM$. With respect to the coordinates on $TU$ induced by $(U,x)$ the system (\ref{eq:geodesic_equation}) becomes equivalent to the first order system
\begin{equation}\label{eq:geodesic_equation_fos}
\begin{cases}
\frac{dx_k}{dt}=y_k,\\
\frac{dy_k}{dt}=-\sum_{i,j=1}^n\Gamma_{ij}^k y_i y_j.\\
\end{cases}
\end{equation}
If the Riemannian metric is of class $\mathcal{C}^{k-1}$ for some $k\geq 3$, then the Christoffel symbols are in particular locally Lipschitz continuous. In this case solutions of the geodesic equation are uniquely determined by their initial conditions in $TM$ due to the Picard-Lindelöf theorem, and their maximal extensions determine the geodesic flow $\phi: V \subset \R \times TM \rightarrow TM$. Moreover, the system (\ref{eq:geodesic_equation_fos}) implies that geodesics are of class $\mathcal{C}^{k}$ as a map to $M$ and that the geodesic flow is of class $\mathcal{C}^{k-2}$ by Peano's theorem on the dependence on initial conditions \cite[Theorem~V.4.1]{Ha82}. Hence, the same is true for the exponential map $\exp(v)=\pi(\phi(1,v))$, where $\pi: TM \To M$ denotes the canonical projection.

In Section \ref{lem:geo_lipschitz} we will show that for a $\mathcal{C}^2$ submanifold of $\R^n$ there still exists a unique (maximal) geodesic flow defined in terms of constant speed locally length minimizing curves. In this case geodesics still satisfy the geodesic equation and are of class $\mathcal{C}^1$ by the Du Bois-Reymond trick \cite[Section~2.1]{ST18}. In fact, these statements also hold for a $\mathcal{C}^{1,1}$ submanifold if the geodesic equation is understood in a weaker sense of Filippov, see Lemma \ref{lem:geo_lipschitz}  and the remark thereafter.

\subsection{Differentiability}\label{sub:differentiability} For $k\geq 3$ in the situation of Theorem~\ref{thm:regularity_gf} the geodesic flow exists, is continuously differentiable and satisfies all the usual properties by standard ODE theory, see Section~\ref{sub:geodesic_flow}. In this case, in view of proving the regularity claim of Theorem~\ref{thm:regularity_gf}, it suffices to work locally, i.e. we can always replace $M$ by a small neighborhood of a point $p\in M$, by continuity and the dynamical system property $\phi(s+t,v)=\phi(s,\phi(t,v))$ (which hold wherever these expressions are defined) of the geodesic flow as the composition of two $\mathcal{C}^k$ maps is again of class $\mathcal{C}^k$. In the case $k = 2$ we will be able to do the same once we have shown that Theorem~\ref{thm:regularity_gf} holds locally, see Section \ref{sec:low_reg_case}.

Although it suffices to work locally, let us also recall the following coordinate free picture. For a $\mathcal{C}^{k}$ manifold with $\mathcal{C}^{k-1}$ Riemannian metric, $k\geq 2$, let $E$ be the pullback of the Whitney sum vector bundle $TM \oplus TM \rightarrow M$ to $TM$:
\[
\begin{tikzcd}
E \arrow[dashed]{r}{\pi_1} \arrow[swap,dashed]{d}{\pi_2} & TM \oplus TM \arrow{d}{\pi\oplus \pi} \\
TM \arrow{r}{\pi} & M
\end{tikzcd}
\]
The pullback diagram induces a $\mathcal{C}^{k-2}$ vector bundle structure on $E$. We represent an element $X \in T_vTM$ as the derivative of some $\mathcal{C}^{k-1}$ curve $v(s)\in T M \subset TM$ with $v(0)=v$ at $s=0$. Set $p(s)=\pi(v(s))$. Then the map
\[
		 \begin{array}{cccl}
		 \psi : &TTM				& \To  	&	E \\
		           		& X 	& \MTo  & (\pi(X),(D\pi X = p'(0), (\nabla_{p'(0)} v)(0)))
		 \end{array}
\]
defines a $\mathcal{C}^{k-2}$ vector bundle isomorphism.

We also recall the following characterization of $\mathcal{C}^{k}$ differentiability.

\begin{fact}\label{lem:diff_char}
Let $N$ and $P$ be $\mathcal{C}^{k}$ manifolds with $k\geq 1$. Then a continuous map $\varphi: N \rightarrow P$ is of class $\mathcal{C}^{k}$, if and only if each point $p$ in $N$ has a neighborhood $U$ such that for each $\mathcal{C}^{k-1}$ vector field $X$ on $U$ the map $U \ni q \mapsto D_{X(q)}\varphi(q)\in TP$ exists and is of class $\mathcal{C}^{k-1}$.
\end{fact}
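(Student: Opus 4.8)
The plan is to prove the two implications separately, handling the forward (``only if'') direction by a routine composition argument and the reverse (``if'') direction by reduction to a classical criterion in coordinates.

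For the ``only if'' direction, suppose $\varphi$ is of class $\mathcal{C}^{k}$. Then its tangent map $D\varphi\colon TN \To TP$ is of class $\mathcal{C}^{k-1}$, since the tangent map of a $\mathcal{C}^{k}$ map between $\mathcal{C}^{k}$ manifolds is $\mathcal{C}^{k-1}$. Given any point $p$, any neighborhood $U$, and any $\mathcal{C}^{k-1}$ vector field $X\colon U \To TN$, the map $q \mapsto D_{X(q)}\varphi(q)$ is exactly the composition $D\varphi \circ X$ of two $\mathcal{C}^{k-1}$ maps, hence of class $\mathcal{C}^{k-1}$. This gives the required neighborhood (indeed every neighborhood works).

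For the ``if'' direction, I would fix $p \in N$ together with the neighborhood $U$ supplied by the hypothesis. As being $\mathcal{C}^{k}$ is a local property, I would pass to charts so that $N$ is identified with an open subset of $\R^{m}$ near $p$, that $\varphi$ takes values in a chart of $P$ identified with an open subset of $\R^{n}$, and that $TP$ is correspondingly identified with $P \times \R^{n}$ over this chart. Applying the hypothesis to the constant coordinate vector fields $X = e_{i}$, $1 \le i \le m$, which are smooth and in particular $\mathcal{C}^{k-1}$, one has $D_{X(q)}\varphi(q) = \partial_{i}\varphi(q)$, and the hypothesis asserts that the $TP$-valued map $q \mapsto (\varphi(q),\partial_{i}\varphi(q))$ exists and is of class $\mathcal{C}^{k-1}$. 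Reading off the two components, the base component shows that $\varphi$ itself is of class $\mathcal{C}^{k-1}$ (in particular continuous), while the fibre component shows that each first-order partial derivative $\partial_{i}\varphi$ exists on $U$ and is of class $\mathcal{C}^{k-1}$.

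It then remains to upgrade this to $\mathcal{C}^{k}$-regularity. Since $\varphi$ is continuous and all its first-order partial derivatives exist and are continuous on $U$, the classical theorem that continuity of the partial derivatives implies total differentiability yields $\varphi \in \mathcal{C}^{1}$; as these partials are moreover of class $\mathcal{C}^{k-1}$, it follows by the definition of $\mathcal{C}^{k}$ (a map is $\mathcal{C}^{k}$ precisely when it is differentiable with $\mathcal{C}^{k-1}$ derivative) that $\varphi$ is of class $\mathcal{C}^{k}$ near $p$. The step requiring care is that the hypothesis only furnishes directional derivatives along prescribed vector fields, a pointwise notion that a priori need not assemble into genuine differentiability; selecting the constant coordinate fields is exactly what reduces the problem to the classical continuous-partials criterion and circumvents that difficulty. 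For $k=1$ the same argument applies verbatim with $\mathcal{C}^{k-1} = \mathcal{C}^{0}$ throughout.
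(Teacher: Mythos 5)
The paper states this result as a recalled \emph{Fact} and supplies no proof of its own, so there is no in-paper argument to compare against; judged on its own merits, your proof is correct and is the natural one. The forward direction via the composition $D\varphi\circ X$ of $\mathcal{C}^{k-1}$ maps (legitimate because $TN$ and $TP$ are $\mathcal{C}^{k-1}$ manifolds and $D\varphi$ is $\mathcal{C}^{k-1}$) is fine, and the reverse direction correctly reduces to the classical criterion: coordinate vector fields give existence and continuity of all first-order partials, hence $\varphi\in\mathcal{C}^{1}$, and since those partials are moreover $\mathcal{C}^{k-1}$ the map is $\mathcal{C}^{k}$. You also correctly flag the real subtlety, namely that directional derivatives along prescribed fields do not by themselves constitute differentiability, and that choosing the constant coordinate fields is what makes the continuous-partials theorem applicable.

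One small technical point you should patch: the hypothesis quantifies over $\mathcal{C}^{k-1}$ vector fields defined on all of $U$, whereas your constant fields $e_{i}$ are only defined on the chart domain, which need not contain $U$. This is repaired by a cutoff: multiply $\partial/\partial x_{i}$ by a bump function equal to $1$ near $p$ and supported in the intersection of $U$ with the chart domain, and extend by zero to a $\mathcal{C}^{k-1}$ vector field on $U$; since the desired conclusion is local, knowing $D_{X(q)}\varphi(q)=\partial_{i}\varphi(q)$ on the smaller neighborhood where the cutoff equals $1$ suffices. As written, the application of the hypothesis to $X=e_{i}$ is not literally licensed, but this is a routine fix rather than a genuine gap.
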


We would like to apply this statement in the situation of Theorem~\ref{thm:regularity_gf} to the geodesic flow $\phi: V=N \subset \R \times TM \rightarrow TM=P$ of a $\mathcal{C}^{k}$ submanifold $M$ with $k\geq 2$, so that $N$ and $P$ are $\mathcal{C}^{k-1}$ manifolds, once we know that the geodesic flow exists and is continuous. By linearity it then suffices to check the condition in Fact~\ref{lem:diff_char} for vector fields on $V\subset \R \times TM$ which are everywhere tangent to the second factor $TM$, and for the constant vector field $\frac{\partial}{\partial t}$ which is everywhere tangent to the second factor. 

If the geodesic flow in the situation of Theorem~\ref{thm:regularity_gf} exists and is continuous, then its geodesics are of class $\mathcal{C}^{k}$ and the geodesic vector field $G(v)=\frac{\partial}{\partial t}|_{t=0}\phi(t,v)$ is of class $\mathcal{C}^{k-2}$ by the geodesic equation. In the coordinate free picture the latter follows from $\pi_2(\psi(G(v)))=v$ and $\pi_1(\psi(G(v)))=(v,0)$. Hence, for the vector field $\frac{\partial}{\partial t}$ on $V$ the condition of Fact~\ref{lem:diff_char} is satisfied in this case.

\subsection{Jacobi fields and Gauß equation}\label{sub:jacobi_field}

For a proof of Theorem \ref{thm:regularity_gf} in the case $k \geq 3$ it remains to verify the condition in Fact~\ref{lem:diff_char} for vector fields on $V\subset \R \times TM$ which are everywhere tangent to the second factor $TM$.  For such a vector field $X$ and sufficiently large $k$ it is well known that $\psi(D\phi(v,t) X(v))$ can be obtained as a solution of Jacobi's equation with initial conditions $(J(0),J'(0))=\pi_2(\psi(X))$ along the geodesic $\gamma_v:[0,1] \rightarrow M$ with initial condition $v=\pi_1(\psi(X))$. We recall the computation for the convenience of the reader. Consider the geodesic variation $\tau(t,s)=\exp(t\cdot v(s))=\pi(\phi(t,v(s)))$ of $\gamma_v$, where again $v(s)$ is a $\mathcal{C}^1$ curve in $TM$ with value $v$ and derivative $X(v)$ at $s=0$. Then the Jacobi field $J(t)=\frac{\partial}{\partial s}|_{s=0} \tau(t,s)$ along $\gamma_v$ satisfies the following so-called Jacobi equation, cf. e.g. \cite[Chapter~5]{dC92},
\begin{equation}\label{eq:jacobi_equation}
		J''(t) = \nabla_t^2  \partial_s \tau(t,s)|_{s=0} = \nabla_t \nabla_s \partial_t \tau(t,s)|_{s=0}=-R(\gamma_v'(t),J(t))\gamma_v'(t),
\end{equation}
where $R$ denotes the Riemann curvature tensor. Here we have interchanged derivatives twice, see \cite[Lemma~3.4, Lemma~4.1]{dC92}, and applied the geodesic equation (\ref{eq:geodesic_equation}) in the last step. Moreover, we have
\[
		\pi_1(\psi(D\phi(t,v) X(v)))= \phi(t,v)=\gamma_v'(t),
\]
\[
D\pi (D\phi(t,v) X(v))= \partial_s \pi(\phi(t,v(s)))|_{s=0} = \partial_s \tau(t,s)|_{s=0} = J(t),
\]
\begin{equation}\label{eq:init_jac_deriv}
\nabla_{J(t)} \phi(t,v(s)) |_{s=0} =  \nabla_s \partial_t \tau(r,s) |_{s=0} = \nabla_t \partial_s \tau(t,s) |_{s=0} = J'(t),
\end{equation}
and thus
\begin{equation}\label{eq:init_jac}
\psi(D\phi(v,t) X(v,t))=(c_v'(t),(J(t),J'(t)))
\end{equation}
as claimed. However, note that interchanging derivatives in (\ref{eq:jacobi_equation}) and (\ref{eq:init_jac_deriv}) requires sufficiently high regularity to begin with. For now we assume that $k\geq 4$ so that this is not an issue.

At this point it suffices to verify that the ``right hand sides'' of the geodesic equation and the Jacobi equation are of class $\mathcal{C}^{k-2}$ as the regularity claim about the geodesic flow and the exponential map is then implied by the standard ODE result on the differentiable dependence on initial conditions \cite[Theorem~V.4.1]{Ha82}. For the geodesic equation this assumption is satisfied as the Christoffel symbols involve only first order derivatives of the metric. In case of the Jacobi equation the right hand side is given by the Riemann curvature tensor, which in general is only of class  $\mathcal{C}^{k-3}$. However, for a $\mathcal{C}^{k}$ submanifold $M$ of a Riemannian manifold $\overline{M}$ with a $\mathcal{C}^{k}$ Riemannian metric, $k\geq 3$, the curvature tensor $R$ of $M$ is actually of class $\mathcal{C}^{k-2}$ by the Gauß equation \cite[Proposition~3.1]{dC92}. In view of the case $k= 3$ we recall how the expression of Gauß' equation naturally comes up in the derivation of Jacobi's equation for submanifolds of $\R^n$. Note that we denote the usual derivative in $\R^n$ also by $D$ if we think of the object that we differentiate as a vector field: Set $V= \partial_t \tau$. Then we can write $D_s V = \nabla_s V + \Pi(\partial_s\tau,V)$, where the two terms of the sum, the covariant derivative and the second fundamental form $\Pi$, a $(0,2)$-tensor, are the components of $D_s V$ in the tangent space and in the normal space of $M$. Likewise, we have $D_t V = \Pi(\partial_t\tau,V)$ by the geodesic equation. Taking another derivative leads to
\[
		\left\langle D_t D_sV,W \right\rangle = \left\langle \nabla_t\nabla_s V,W\right\rangle + \left\langle \Pi(\partial_s \tau,V),\Pi(\partial_t \tau,W) \right\rangle
\]
and
\[
		\left\langle D_sD_tV,W \right\rangle =  \left\langle \Pi(\partial_t \tau,V),\Pi(\partial_s \tau,W) \right\rangle
\]
at $p=c(t)\in M$ for all $W\in T_pM$. Provided we are in sufficiently high regularity so that interchanging usual derivatives is not a problem, we obtain
\begin{equation}\label{eq:jac_eq_sub}
		\left\langle J'',W\right\rangle =  \left\langle \Pi(V,V),\Pi(J,W) \right\rangle - \left\langle \Pi(J,V),\Pi(V,W) \right\rangle
\end{equation}
for all $W\in T_pM$, where we abbreviate the second covarient derivative of $J$ along $c$ as $J''$. Here, the right hand side is indeed of class $\mathcal{C}^{k-2}$ and can be identified with $-\left\langle R(V,J)V,W \right\rangle$. %In the following, when we talk about the curvature tensor of a submanifold, we will always have its expression in terms of the second fundamental forms in mind.

\subsection{Interchanging partial derivatives}\label{sub:int_part_deri}

In order to complete the proof of Theorem \ref{thm:regularity_gf} for $k=3$ it suffices to justify the commutation of partial derivatives in (\ref{eq:init_jac_deriv}) and (\ref{eq:jac_eq_sub}) in this case. A nice overview on different versions of Schwarz' theorem can be found in the paper \cite{Min14} by Minguzzi. Here we can apply the following version due to Peano \cite{Pe92}\cite[Theorem~9.41]{Ru76}.

\begin{thm}[Peano]\label{thm:peano_schwarz} Let $f:U \To \R^n$ be defined on an open set $U\subset \R^2$. Suppose that the partial derivatives $D_1f$, $D_2D_1f$ and $D_2f$ exist on $U$ and that $D_2D_1f$ is continuous at some point $(x_0,y_0)\in U$. Then $D_1D_2f$ exists at $(x_0,y_0)$ and
\[
			(D_1D_2f)(x_0,y_0) = (D_2D_1f)(x_0,y_0).
\]
\end{thm}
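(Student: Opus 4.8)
The plan is to reduce everything to a symmetric second difference and apply the one-variable mean value theorem twice, in the order dictated by the hypotheses. For $(h,k)$ small and nonzero set
\[
\Delta(h,k) = f(x_0+h,y_0+k) - f(x_0+h,y_0) - f(x_0,y_0+k) + f(x_0,y_0),
\]
and write $A = (D_2D_1f)(x_0,y_0)$. Fixing $k$ and putting $u(x) = f(x,y_0+k) - f(x,y_0)$, whose derivative is $u'(x) = D_1f(x,y_0+k) - D_1f(x,y_0)$ since $D_1f$ exists on $U$, the mean value theorem gives $\Delta(h,k) = u(x_0+h) - u(x_0) = h\,u'(x_0+s)$ for some $s$ between $0$ and $h$. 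Applying the mean value theorem again to $t \mapsto D_1f(x_0+s,y_0+t)$, whose derivative is $(D_2D_1f)(x_0+s,y_0+t)$ by hypothesis, yields $\Delta(h,k) = hk\,(D_2D_1f)(x_0+s,y_0+t)$ for some $t$ between $0$ and $k$.

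Next I would bring in the continuity of $D_2D_1f$ at $(x_0,y_0)$. Given $\varepsilon>0$, choose $\delta>0$ so that $|(D_2D_1f)(x_0+\sigma,y_0+\tau) - A| < \varepsilon$ whenever $|\sigma|,|\tau| < \delta$. Since the intermediate points above satisfy $|s|\le|h|$ and $|t|\le|k|$, it follows that
\[
\left| \frac{\Delta(h,k)}{hk} - A \right| < \varepsilon \qquad \text{for all } 0<|h|,|k|<\delta.
\]

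The final step extracts $D_1D_2f$ from this uniform estimate. Because $D_2f$ exists everywhere on $U$, for fixed $h$ we may let $k\to 0$ in $\Delta(h,k)/k$ to obtain $g(h) := D_2f(x_0+h,y_0) - D_2f(x_0,y_0) = \lim_{k\to0}\Delta(h,k)/k$. Dividing by $h$ and passing to the limit $k\to 0$ in the displayed inequality --- legitimate precisely because the bound holds uniformly over all small nonzero $k$ --- gives $|g(h)/h - A| \le \varepsilon$ for every $h$ with $0<|h|<\delta$. Hence $\lim_{h\to0} g(h)/h = A$, which by the definition of $g$ says exactly that $(D_1D_2f)(x_0,y_0)$ exists and equals $A = (D_2D_1f)(x_0,y_0)$.

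I expect the one genuine subtlety to be the asymmetry of the hypotheses: we may differentiate only in the order $D_1$ then $D_2$ when forming $D_2D_1f$, so the two applications of the mean value theorem must be performed in exactly that order, and we never assume a priori that $D_1D_2f$ exists. The payoff comes at the last limit $k\to 0$: the mere existence of $D_2f$ converts $\Delta(h,k)/k$ into the difference quotient of $g$, while the uniformity in $k$ of the $\varepsilon$-bound is what allows the inequality to survive the limit and simultaneously deliver both the existence and the value of the remaining mixed partial.
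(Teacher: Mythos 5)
The paper does not actually prove this theorem --- it is quoted as a known result, with citations to Peano's original note and to Rudin's \emph{Principles of Mathematical Analysis}, Theorem~9.41. Your argument is precisely the proof given there (Rudin's Theorems 9.40 and 9.41): the symmetric second difference $\Delta(h,k)$, two applications of the mean value theorem performed in the order forced by the hypotheses to get $\Delta(h,k)=hk\,(D_2D_1f)(x_0+s,y_0+t)$, the $\varepsilon$--$\delta$ estimate from continuity of $D_2D_1f$ at the single point, and finally the limit $k\to 0$ using only the existence of $D_2f$ to produce the difference quotient of $h\mapsto D_2f(x_0+h,y_0)$. The logic, including the correct weakening of the strict inequality to $\le\varepsilon$ after the limit, is sound.

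One step does need repair: the statement is for $f\colon U\to\R^n$, and the equality form of the mean value theorem that you invoke twice, $u(x_0+h)-u(x_0)=h\,u'(x_0+s)$, is \emph{false} for vector-valued functions (take $u(t)=(\cos t,\sin t)$ on $[0,2\pi]$: the increment is $0$ but $u'$ never vanishes). Rudin's theorem is stated for real-valued $f$, which is why he can use it. The fix is immediate and standard: run your entire argument on each component $f_i$ separately --- all hypotheses (existence of $D_1f_i$, $D_2f_i$, $D_2D_1f_i$ and continuity of $D_2D_1f_i$ at $(x_0,y_0)$) are inherited componentwise --- concluding $(D_1D_2f_i)(x_0,y_0)=(D_2D_1f_i)(x_0,y_0)$ for each $i$, which is the vector statement since partial derivatives of $\R^n$-valued maps are computed componentwise. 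Note that the intermediate points $s,t$ then depend on the component, so there is no single pair $(s,t)$ making your displayed identity for $\Delta(h,k)$ true as a vector equation; only the componentwise estimates survive, but they are all you need.
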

Indeed, in case of a $\mathcal{C}^3$ submanifold $\tau$, $V=\partial_t \tau$ and $D_t V = \Pi(V,V)$ are a priori all of class $\mathcal{C}^1$. Therefore, the computions in (\ref{eq:init_jac_deriv}) and (\ref{eq:jac_eq_sub}) are justified in this case by Peano's theorem. This completes the proof of Theorem \ref{thm:regularity_gf} for $k\geq 3$.

\section{The low regularity case}\label{sec:low_reg_case}

For submanifolds of regularity below $\mathcal{C}^3$ it becomes more difficult to show that Jacobi fields satisfy the Jacobi equation. Instead, we will work with the Jacobi equations of approximating smooth submanifolds. 

Let $M$ be a $\mathcal{C}^2$ submanifold of $\R^n$. Again, once we have shown the claim of Theorem~\ref{thm:regularity_gf} locally and verified that geodesics and the resulting geodesic flow satisfy all the usual properties, notably the dynamical system property, the claim also follows globally as discussed in Section \ref{sub:differentiability}. Locally around a point $p\in M$ we write $M$, perhaps after the application of an isometry of $\R^n$, as a graph of a $\mathcal{C}^{2}$ function $h: \R^{m} \supset U \To \R^{n-m}$ with $0 \in U$ and $h(0)=p$. Perhaps after shrinking $U$ we approximate $h$ by a sequence of smooth functions $h_l : U \To \R^{n-m}$ in $\mathcal{C}^{2}$ with $h_l(0)=p$. We denote the pullback of the euclidean metric to $U$ via $h_l$ and $h$ as $g_l$ and $g$, respectively. Then $g_l$ converges to $g$ in $\mathcal{C}^{1}$ and the second fundamental forms $\Pi_l$ converge to the second fundamental form $\Pi$ of $M$ in $\mathcal{C}^{0}$. If $M$ is merely of class $\mathcal{C}^{1,1}$, then the $g_l$ converge to $g$ in $\mathcal{C}^{0,1}$ and the $\Pi_l$ are uniformly bounded.

For completeness we state the following lemma in terms of a $\mathcal{C}^{1,1}$ submanifold.

\begin{lem}\label{lem:geo_lipschitz}
Let $M$ be a $\mathcal{C}^{1,1}$ submanifold of $\R^n$. Then any point $p\in M$ has an open neighborhood $U_p$ such that for any $v \in TU_p$ there exists a unique maximal locally length minimizing constant speed $\mathcal{C}^{1}$ curve $\gamma_v$ in $U_p$ with $\gamma'(0)=v$. Let $\phi: V \rightarrow TU_p$, $V \subset \R \times TU_p$, $\phi(t,v):= \gamma_v'(t)$ be the geodesic flow defined at those $(t,v) \in \R \times TU_p$ for which $\gamma_v$ extends up to time $t$. Then $V \subset \R \times TU_p$ is open, $\phi$ is locally Lipschitz and $\phi(\cdot,v)$ is $\mathcal{C}^{1,1}$ wherever it is defined.
\end{lem}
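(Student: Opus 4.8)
The plan is to prove Lemma~\ref{lem:geo_lipschitz} by a compactness/approximation argument combined with uniform curvature bounds, exploiting that the second fundamental forms $\Pi_l$ of the smooth approximants $h_l$ are uniformly bounded on a neighborhood $U_p$ of $p$. First I would fix such a neighborhood $U_p$ small enough that, in the graph coordinates, the metrics $g_l$ are uniformly comparable to the Euclidean metric and the operator norms $\|\Pi_l\|$ are bounded by a constant $C$ independent of $l$. Since each $h_l$ is smooth, each approximant carries a genuine smooth geodesic flow $\phi_l$ by standard ODE theory, and each $\phi_l(\cdot,v)$ solves the geodesic equation~(\ref{eq:geodesic_equation_fos}) with Christoffel symbols $\Gamma^k_{ij,l}$.

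\emph{Uniform bounds and subconvergence.} The key analytic input is that a constant-speed geodesic $\gamma$ of a submanifold satisfies $\gamma'' = \Pi(\gamma',\gamma')$ in the ambient $\R^n$, so $\|\gamma''\|\le C\|\gamma'\|^2 = C|v|^2$ is uniformly controlled. I would normalize speed and restrict to a fixed small time interval $[-T,T]$ on which all $\gamma_{v,l}$ stay inside $U_p$; the uniform bound on $\gamma''_{v,l}$ gives equi-Lipschitz first derivatives, so by Arzel\`a--Ascoli the curves $\gamma_{v,l}$ subconverge in $\mathcal{C}^1$ to a limit curve $\gamma_v$. Because $g_l \to g$ in $\mathcal{C}^0$ and lengths are continuous functionals in $\mathcal{C}^1$, each $\gamma_{v,l}$ being locally length minimizing for $g_l$ forces the limit $\gamma_v$ to be locally length minimizing for $g$; this is the step where I pass the variational/minimizing property to the limit, which is softer than passing an ODE to the limit and is exactly why the approximation route is taken. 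By \cite{ST18} the resulting $\gamma_v$ is a $\mathcal{C}^1$ (indeed the $\mathcal{C}^{1,1}$ regularity of $\phi(\cdot,v)$ follows from $\gamma''_v = \Pi(\gamma'_v,\gamma'_v) \in L^\infty$) constant-speed geodesic with $\gamma'_v(0)=v$.

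\emph{Uniqueness.} The main obstacle is uniqueness, since below $\mathcal{C}^2$ (for the metric, i.e.\ $\mathcal{C}^{1,1}$ submanifold) the Christoffel symbols are merely $L^\infty$ and Picard--Lindel\"of does not apply directly. Here I would use the uniform curvature bound to prevent branching: two locally minimizing geodesics with the same initial velocity $v$ cannot separate, because the distance between them and the angle between their velocity vectors obey a Gr\"onwall-type estimate driven by $\|\Pi\|\le C$. Concretely, if $\gamma_1,\gamma_2$ both solve $\gamma''=\Pi(\gamma',\gamma')$ with the same initial data, then $w(t)=\gamma_1(t)-\gamma_2(t)$ and its derivative satisfy a linear differential inequality $\|w''\|\le C'(\|w\|+\|w'\|)$ coming from the Lipschitz dependence of $\Pi$ on the base point and the quadratic-in-velocity structure, and Gr\"onwall forces $w\equiv 0$. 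This argument is robust enough to work with the $L^\infty$ bound on $\Pi$, giving uniqueness of the maximal locally minimizing $\gamma_v$ without any Lipschitz hypothesis on the Christoffel symbols.

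\emph{Openness of $V$ and local Lipschitz regularity of $\phi$.} Finally, once existence and uniqueness of $\gamma_v$ are established, I would identify $\phi(t,v)=\gamma_v'(t)$ with the subsequential limit of the smooth flows $\phi_l$; uniqueness of the limit upgrades subconvergence to full convergence. The uniform bounds on $\phi_l$ and on their first time-derivatives, together with equicontinuity in $v$ derived from the same Gr\"onwall estimate applied to nearby initial data, show that $\phi$ is locally Lipschitz jointly in $(t,v)$. Openness of $V$ follows from the uniform escape-time bounds: if $\gamma_v$ extends to time $t$ while staying in a compact part of $U_p$, the curvature bound guarantees nearby $\gamma_w$ also extend past $t$, so $V$ is open. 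This completes the proof, with uniqueness via the no-branching Gr\"onwall argument being the crux.
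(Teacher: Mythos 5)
Your skeleton---graph coordinates, smooth approximants with uniformly bounded second fundamental forms $\Pi_l$, Arzel\`a--Ascoli subconvergence of the approximating geodesics, and identification of $\phi$ with the limit of the smooth flows $\phi_l$---is the same as the paper's, but the two steps you yourself identify as the crux are carried by an assumption that is exactly what is unavailable at this regularity. Your uniqueness argument hinges on the inequality $\|w''\|\le C'(\|w\|+\|w'\|)$, which you say comes ``from the Lipschitz dependence of $\Pi$ on the base point.'' For a $\mathcal{C}^{1,1}$ submanifold $\Pi$ is merely an $L^\infty$ tensor defined almost everywhere (not even continuous), and for a $\mathcal{C}^2$ submanifold it is continuous but not Lipschitz; the term $\left\|\bigl(\Pi(\gamma_1)-\Pi(\gamma_2)\bigr)(\gamma_2',\gamma_2')\right\|$ therefore cannot be bounded by $C'\|w\|$. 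Lipschitz dependence of $\Pi$ on the base point would mean $M\in\mathcal{C}^{2,1}$, two orders above the hypothesis, so your ``Gr\"onwall forces $w\equiv 0$'' is Picard--Lindel\"of in disguise---precisely the tool you correctly observe does not apply. The same defect undermines your Lipschitz-in-$v$ claim for $\phi$: running that estimate on the approximants would need Lipschitz bounds on $\Pi_l$ that are uniform in $l$, but these constants involve third derivatives of $h_l$ and blow up as $l\to\infty$. The paper does both steps differently. Uniqueness/non-branching is a synthetic input: by \cite[Proposition~1.7]{AK21} the limit manifold $M$ has two-sided curvature bounds in the Alexandrov sense, and geodesics in such spaces cannot branch. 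The locally uniform Lipschitz bound on the $\phi_l$ comes instead from differentiating each \emph{smooth} flow: the derivative satisfies the linear Jacobi equation in (\ref{eq:jacobi_geodesic_equation_sys}), whose coefficient matrix is uniformly bounded via the Gauss equation in terms of $\|\Pi_l\|^2\le C^2$, and Gr\"onwall applied to this \emph{linear} ODE along a single geodesic needs only boundedness, not Lipschitz continuity, of the coefficients; the resulting uniform bounds on $D\phi_l$ survive the limit.

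The second gap is your passage of the minimizing property to the limit, which you call ``soft.'' It is not: ``locally length minimizing'' carries no scale, so a priori the $g_l$-geodesics might minimize only on scales shrinking to $0$ as $l\to\infty$ (and you also need to know they minimize on a fixed time interval to even set up the comparison). The paper supplies the missing uniformity by proving a uniform positive lower bound $r$ on the injectivity radii of the $g_l$: it completes the graphs of the $h_l$ to compact submanifolds with uniformly bounded second fundamental form using a hat function, invokes the bound $\min\{\pi/C, l(N)/2\}$ from \cite[Section~6.6]{Pe16}, and bounds the length of closed geodesics from below via Fenchel's theorem (total curvature of a closed loop is at least $2\pi$). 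This uniform $r$ guarantees that each approximating geodesic is minimizing on a uniform interval and powers the contradiction argument: a competitor strictly shorter than the limit curve would, for large $l$, produce a curve shorter than the minimizing $g_l$-geodesic $\gamma_l$ of $g_l$-length less than $r$, which is impossible. Without this injectivity radius argument (or some substitute providing a uniform minimization scale), your limit curve need not be locally length minimizing at all, and the lemma's existence statement does not follow.
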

\begin{proof} We keep working in the setting introduced above. We claim that the injectivity radii of the $g_l$ in some smaller neighborhood of $0$ are uniformly bounded from below by some $r>0$. On a compact smooth submanifold $N$ of $\R^n$ the injectivity radius is bounded from below by $\min \{\pi/C,l(N)/2\}$, where $C$ is an upper bound on the norms of the second fundamental forms of $N$ and $l(N)$ is the length of the shortest closed geodesic \cite[Section~6.6]{Pe16}. Since the $\Pi_l$ are uniformly bounded, we can use a smooth hat function to complete the graphs of the functions $h_l$ to compact submanifolds $N_l$ with a uniform bound on their second fundamental forms $C'$ without changing the graphs in a small neighborhood of $p$. Since the total curvature of a closed loop in $\R^n$ is at least $2\pi$, the bound $C'$ yields a uniform lower bound on $l(N_l)$. Hence, the injectivity radii of the $N_l$ are uniformly bounded from below. Therefore, there is some $r>0$ and some neighborhood $W \subset U$ of $0$ such that the injectivity radii on $W$ of $g$ and of any $g_l$ are uniformly bounded from below by $3r$ and such that the $3r$-neighborhood of $h(W)$ with respect to the induced length metric on $M$ is contained in $h(U)$. We can assume that $\frac 1 2 ||\cdot ||_{g} \leq ||\cdot ||_{g_l} \leq 2 ||\cdot ||_{g}$ for any $l$. Then the geodesic flow of any $g_l$ is well-defined as a smooth map $\phi_l : \{(t,v)\in \R \times TW \mid t ||v||_g < r \} \rightarrow TU$.

For $v \in TW$ let $\gamma_l:[0,s] \rightarrow U$ be a minimizing constant speed $g_l$-geodesic with $\gamma_l'(0)=v$ and $s ||v||_g< r $. Since $\gamma_l'$ and $\gamma_l''$ are uniformly bounded, the sequence $\gamma_l$ subconverges to a $\mathcal{C}^{1,1}$ constant speed curve $\gamma:[0,s] \rightarrow U$ with $\gamma'(0)=v$. We claim that $\gamma$ is a $g$-geodesic. For that we can assume that $\gamma$ is not constant. Then it remains to show that $d(\gamma(0),\gamma(s))\geq s ||v||_g = L(\gamma)$, where $L(\gamma)$ denotes the $g$-length of $\gamma$. Otherwise there is some $\varepsilon > 0$ and a smooth curve $c$ in $U$ connecting $\gamma(0)$ and $\gamma(s)$ of $g$-length $L(c)<L(\gamma) - \varepsilon$. Then also $L_l(c)<L(\gamma)-\varepsilon/2 <L_l(\gamma_l)-\varepsilon/3<r-\varepsilon/3$ for some sufficiently large $l$. For some larger $l$ we can extend $c$ to a piecewise smooth curve $\tilde c$ of length $L_l(\tilde c) < r$ connecting $\gamma_l(0)=\gamma(0)$ and $\gamma_l(s)$, contradicting the fact that the injectivity radius of $g_l$ on $V$ is larger than $r$. Moreover, since $M$ is a manifold with two-sided curvature bounds \cite[Proposition~1.7]{AK21} geodesics in $M$ cannot branch. Therefore, the sequence $\gamma_l$ actually converges to $\gamma$. Moreover, the geodesic flow of $g$ is well-defined as a map $\tilde \phi : \{(t,v)\in \R \times TW \mid t ||v||_g < r \} \rightarrow TU$ and the sequence of maps $\phi_l$ converges to $\tilde \phi$.

The uniform bound on the $\Pi_l$ entails a locally uniform bound on Jacobi fields via an application of Grönwall's lemma to the Jacobi equation in (\ref{eq:jacobi_geodesic_equation_sys}), which describe the derivatives of the exponential maps of the metrics $g_l$. Since the derivatives of the geodesic flows $\phi_l$ in the time direction are locally bounded as well by the geodesic equation, the limit map $\tilde \phi$ of the sequence $\phi_l$ is locally Lipschitz. Now we can choose a small neighborhood $U_p$ of $p$ so that any unit-speed $g$-geodesic starting in $U_p$ leaves $U_p$ in time $r/2$. Then $\tilde \phi$ restricts to the desired map $\phi: V \rightarrow TU_p$ and openess of $V$ follows from continuity of $\tilde \phi$.
\end{proof}

We remark that for a $\mathcal{C}^{0,1}$ Riemannian metric, like on a $\mathcal{C}^{1,1}$ submanifold of $\R^n$ as in Lemma~\ref{lem:geo_lipschitz}, constant speed locally length minimizing curves still satisfy the geodesic equation in the sense of differential inclusions of Filippov \cite{Fi88,ST18} as shown in \cite{LLS21}, see \cite[Theorem~1.1]{LLS21} and the second paragraph after \cite[Proposition~1.4]{LLS21}.

Let us now analyse the regularity of the geodesic flow of a $\mathcal{C}^{2}$ submanifold $M$ of $\R^n$. We keep working in the local setting introduced above. By Lemma \ref{lem:geo_lipschitz}, it suffices to show that the partial derivatives of the geodesic flows of smooth approximations subconverge locally uniformly, i.e. that they are locally equicontinuous. Locally in coordinates the system of the geodesic equation and the Jacobi equation of the approximating smooth submanifolds has the form
\begin{equation}\label{eq:jacobi_geodesic_equation_sys}
\begin{cases}
x_l'(t)=f_l(x_l(t)),\\
X_l'(t)=R_l(x_l(t))X_l(t),\\
x_l(0)=x_0,X_l(0)=X_0.
\end{cases}
\end{equation}
where $x$ encodes the coordinates of the geodesic and its derivative, $X$ encodes the Jacobi field and its covariant derivative and $x_0$, $X_0$ are the respective initial conditions.

Since the $\Pi_k$ converge locally uniformly to the second fundamental form $\Pi$ of $M$, they are locally equicontinuous and locally equibounded. Therefore, the coefficients in (\ref{eq:geodesic_equation_fos}) and in (\ref{eq:jacobi_geodesic_equation_sys}) locally admit a uniform bound and a uniform modulus of continuity, in particular, locally there exists a continuous nondecreasing function $\mu:[0,\infty) \rightarrow [0,\infty)$ with $\mu(0)=0$ such that $\left\| R_l(x)-R_l(y)  \right\| \leq \mu(\left\|x-y \right\|)$ for all $l$. The statement about (\ref{eq:geodesic_equation_fos}) readily implies that the time derivatives of the approximating geodesic flows, i.e. their geodesic vector fields are locally equicontinuous. To obtain the same statement for the spatial derivatives, we first recall the following lemma due to Osgood \cite[Lemma~3.4]{BCD11}\cite{Ch98}.

\begin{lem}[Osgood's lemma] \label{lem:osgood} Let $L$ be a measurable nonnegative function defined on $[t_0,t_1]$. Let $\mu:[0,\infty) \rightarrow [0,\infty)$ be a continuous nondecreasing function with $\mu(0)=0$. Let $a\geq 0$, and assume that for a.e. $t$ in $[t_0,t_1]$
\[
			L(t) \leq a + \int_{t_0}^{t} \mu(L(s)) ds.
\]
If $a>0$, then 
\[
			\int_a^{L(t)} \frac{ds}{\mu(s)} \leq t-t_0
\]
for  a.e. $t \in [t_0,t_1]$. If $a=0$ and $\int_0^{\infty} ds/\mu(s)= \infty$, then $L\equiv 0$  a.e.. Moreover, if $L$ is continuous, then the conclusions holds everywhere.
\end{lem}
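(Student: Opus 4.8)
The plan is to reduce the integral inequality to a differential inequality for the ``majorant''
\[
\mathcal{L}(t) := a + \int_{t_0}^{t} \mu(L(s))\,ds,
\]
and then integrate it by separation of variables. First I would note that, since $\mu\circ L$ is nonnegative and integrable on $[t_0,t_1]$ (so that the hypothesis is meaningful), $\mathcal{L}$ is absolutely continuous and nondecreasing with $\mathcal{L}'(t)=\mu(L(t))$ for a.e.\ $t$, and $\mathcal{L}(t_0)=a$. The standing hypothesis reads $L(t)\le\mathcal{L}(t)$ a.e., and since $\mu$ is nondecreasing this furnishes the key differential inequality $\mathcal{L}'(t)=\mu(L(t))\le\mu(\mathcal{L}(t))$ for a.e.\ $t$.

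For the case $a>0$ I would introduce the primitive $G(u):=\int_a^u \frac{d\xi}{\mu(\xi)}$ for $u\ge a$. As $\mu$ is nondecreasing and $\mu(a)>0$ (the conclusion presupposes $\mu>0$ on $(0,\infty)$), the integrand $1/\mu$ is bounded by $1/\mu(a)$ on $[a,\infty)$, so $G$ is Lipschitz there; hence $G\circ\mathcal{L}$ is absolutely continuous and the chain rule for absolutely continuous functions gives $\frac{d}{dt}G(\mathcal{L}(t))=\mathcal{L}'(t)/\mu(\mathcal{L}(t))\le 1$ for a.e.\ $t$. Integrating from $t_0$ to $t$ and using $G(\mathcal{L}(t_0))=G(a)=0$ yields $G(\mathcal{L}(t))\le t-t_0$. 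Since $G$ is nondecreasing and $L(t)\le\mathcal{L}(t)$ a.e., I conclude $\int_a^{L(t)}\frac{d\xi}{\mu(\xi)}=G(L(t))\le G(\mathcal{L}(t))\le t-t_0$ for a.e.\ $t$, the inequality being trivial when $L(t)<a$ since the left-hand side is then nonpositive.

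For the case $a=0$ I would run the usual $\varepsilon$-perturbation: for every $\varepsilon>0$ the hypothesis gives $L(t)\le\varepsilon+\int_{t_0}^{t}\mu(L(s))\,ds$, so the case already established (with $a=\varepsilon$) yields $\int_\varepsilon^{L(t)}\frac{d\xi}{\mu(\xi)}\le t-t_0\le t_1-t_0$ for a.e.\ $t$. If $L(t)=c>0$ at such a point, then letting $\varepsilon\to 0^+$ produces $\int_0^{c}\frac{d\xi}{\mu(\xi)}\le t_1-t_0<\infty$, contradicting the divergence of $\int_{0}\frac{d\xi}{\mu(\xi)}$ at the lower endpoint. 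Hence $L=0$ a.e. Finally, if $L$ is continuous then $t\mapsto G(L(t))$ is continuous, so each of the displayed inequalities, being valid on a dense full-measure set, extends to every $t\in[t_0,t_1]$.

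The step requiring the most care is the chain-rule differentiation of $G\circ\mathcal{L}$ in the case $a>0$: one must guarantee that $\mathcal{L}$ remains in a region where $1/\mu$ is bounded, so that $G$ is Lipschitz and the composition inherits absolute continuity, and this is exactly what the lower bound $\mathcal{L}\ge a>0$ together with monotonicity of $\mu$ provides. The second subtlety is interpretive: the divergence hypothesis must be read as divergence at the \emph{lower} endpoint $0$, since it is the behaviour of $\int_0^{L(t)}\frac{d\xi}{\mu(\xi)}$ near $0$ that is tested; it is this near-zero divergence, not divergence at infinity, that forces $L\equiv 0$.
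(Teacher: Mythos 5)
The paper offers no proof of this lemma at all --- it is quoted, with $\gamma\equiv 1$, from \cite[Lemma~3.4]{BCD11} --- so there is no internal argument to compare yours against; judged on its own, your proof is correct and is essentially the standard one from that reference: pass to the absolutely continuous majorant $\mathcal{L}(t)=a+\int_{t_0}^t\mu(L(s))\,ds$, deduce $\mathcal{L}'\le\mu(\mathcal{L})$ a.e.\ from the monotonicity of $\mu$, and integrate via the primitive $G(u)=\int_a^u d\xi/\mu(\xi)$, which is indeed $\mathcal{C}^1$ on $[a,\infty)$ because $\mu\ge\mu(a)>0$ there. Two remarks. First, your $\varepsilon$-argument in the case $a=0$ is sound for a reason worth making explicit: the exceptional null set produced by the case $a=\varepsilon$ is precisely the set where the standing hypothesis fails, hence is independent of $\varepsilon$ (the integrated inequality $G(\mathcal{L}(t))\le t-t_0$ holds for \emph{every} $t$, not merely a.e.), so letting $\varepsilon\to 0$ at a fixed good point is legitimate without any further null-set bookkeeping. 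Second, your ``interpretive'' point is more than interpretation: as literally written, the hypothesis $\int_0^\infty ds/\mu(s)=\infty$ is insufficient --- for $\mu(s)=\sqrt{s}$ this integral diverges (at infinity, not at $0$), yet $L(t)=(t-t_0)^2/4$ satisfies $L(t)=\int_{t_0}^t\mu(L(s))\,ds$ and is not a.e.\ zero --- so the condition must indeed be read as the Osgood condition $\int_0^1 ds/\mu(s)=\infty$ at the lower endpoint, which is how the paper itself phrases it when applying the lemma in Section~\ref{sec:low_reg_case}.
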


For an ODE of the form $y'=g(t,y(t))$ where $g$ admits a so-called Osgood modulus of continuity in $y$, i.e. a modulus of continuity $\mu:[0,\infty) \rightarrow [0,\infty)$ satisfying the Osgood condition
\[
				\int_0^1 \frac{dx}{\mu(x)} = \infty,
\]
a standard application of Osgood's lemma yields a unique solution $y(t,y_0)$ of the initial value problem $y'=g(t,y(t))$, $y(0)=y_0$ which is continuous in $y_0$ and whose modulus of continuity can be expressed in terms of $\mu$ \cite[Chapter~3.1]{BCD11}. While the Osgood condition is not satisfied for the system (\ref{eq:jacobi_geodesic_equation_sys}) it is satisfied for the Jacobi equation. The special structure of (\ref{eq:jacobi_geodesic_equation_sys}) allows us to first solve the geodesic equation and then the Jacobi equation with the base coordinates as parameters. It remains to extend the application of Osgood's lemma to this setting.

\begin{lem}\label{lem:c2_case} 
Consider a linear ODE of the form $X'(t)=R(t,x_0) X(t)$ for $t\in [0,t_1]$, where the matrix of coefficients $R$ depends on time and on some parameter $x_0 \in U \subset \R^n$. If $R$ is bounded by $\bar C$ and continuous with a modulus of continuity $\mu_R$, then solutions $X(t,x_0,X_0)$ of the initial value problem
\begin{equation}\label{eq:jacobi_geodesic_equation_sys2}
\begin{cases}
X'(t)=R(t,x_0) X(t),\\
X(0)=X_0,
\end{cases}
\end{equation}
with $||X_0||<C$ for some fixed $C>0$ are also continuous with a modulus of continuity that only depends on $C$, $\bar C$, $\mu$ and $t_0$. Moreover, if $R$ is $\alpha$-Hölder continuous for some $\alpha \in (0,1]$ with $\mathcal{C}^{0,\alpha}$ norm $\bar C$, then solutions $X(t,x_0,X_0)$ of the initial value problem (\ref{eq:jacobi_geodesic_equation_sys2}) are $\alpha$-Hölder continuous as well with a $\mathcal{C}^{0,\alpha}$ norm that only depends on $C$, $\bar C$ and $t_0$.
\end{lem}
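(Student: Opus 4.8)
The plan is to reduce everything to a single integral (Grönwall-type) estimate for the difference of two solutions, exploiting that the only genuinely non-Lipschitz dependence sits in the parameter $x_0$, while the dependence on $X$ is linear and hence Lipschitz with the explicit constant $\bar C$. Writing the initial value problem (\ref{eq:jacobi_geodesic_equation_sys2}) in integral form
\[
X(t) = X_0 + \int_0^t R(s,x_0)\,X(s)\,ds,
\]
a first application of Osgood's lemma (Lemma~\ref{lem:osgood}) with the linear modulus $\mu(s)=\bar C s$, which satisfies the Osgood condition $\int_0^1 ds/(\bar C s)=\infty$, yields the a priori bound $\|X(t)\|\le \|X_0\|\,e^{\bar C t}\le C\,e^{\bar C t_1}=:C'$ for every solution. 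In particular all solutions with $\|X_0\|<C$ are uniformly bounded by $C'$, a constant depending only on $C$, $\bar C$ and $t_1$. This is precisely the point where the linearity of the Jacobi equation (as opposed to the full system (\ref{eq:jacobi_geodesic_equation_sys})) makes the Osgood machinery directly applicable.

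Next I would compare two solutions $X(t)=X(t,x_0,X_0)$ and $Y(t)=X(t,y_0,Y_0)$. Subtracting their integral equations and splitting the integrand as
\[
R(s,x_0)X(s)-R(s,y_0)Y(s)=R(s,x_0)\bigl(X(s)-Y(s)\bigr)+\bigl(R(s,x_0)-R(s,y_0)\bigr)Y(s),
\]
and using $\|R(s,x_0)\|\le\bar C$, $\|R(s,x_0)-R(s,y_0)\|\le\mu_R(\|x_0-y_0\|)$ and $\|Y(s)\|\le C'$, one obtains for $L(t):=\|X(t)-Y(t)\|$ the inequality
\[
L(t)\le a+\int_0^t \bar C\,L(s)\,ds,\qquad a:=\|X_0-Y_0\|+t_1\,C'\,\mu_R(\|x_0-y_0\|).
\]
A second application of Lemma~\ref{lem:osgood} with $\mu(s)=\bar C s$ (case $a>0$) gives $L(t)\le a\,e^{\bar C t_1}$, so that
\[
\|X(t,x_0,X_0)-X(t,y_0,Y_0)\|\le e^{\bar C t_1}\Bigl(\|X_0-Y_0\|+t_1\,C'\,\mu_R(\|x_0-y_0\|)\Bigr)
\]
uniformly in $t\in[0,t_1]$. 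Using monotonicity of $\mu_R$, this is exactly a modulus of continuity $\omega(\delta)=e^{\bar C t_1}\bigl(\delta+t_1 C'\mu_R(\delta)\bigr)$ in the pair $(x_0,X_0)$, depending only on $C$, $\bar C$, $\mu_R$ and $t_1$; the case $a=0$ simultaneously records uniqueness. Joint continuity follows by adding the trivial bound $\|X(t)-X(s)\|\le \bar C C'|t-s|$ in the time variable.

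For the Hölder statement I would specialize $\mu_R(\delta)=\bar C\delta^{\alpha}$ and feed it into the estimate above; the term $t_1 C'\mu_R(\|x_0-y_0\|)$ is then already $\alpha$-Hölder in $x_0$. The only point requiring care is the term $\|X_0-Y_0\|$, which is a priori merely Lipschitz: since the admissible initial data lie in the bounded set $\{\|X_0\|<C\}$, one has $\|X_0-Y_0\|\le(2C)^{1-\alpha}\|X_0-Y_0\|^{\alpha}$, converting it into an $\alpha$-Hölder bound with a constant depending only on $C$ and $\alpha$. Collecting terms yields an $\alpha$-Hölder estimate for $(x_0,X_0)\mapsto X(t,x_0,X_0)$ whose $\mathcal{C}^{0,\alpha}$ norm is controlled by $C$, $\bar C$, $t_1$ and $\alpha$ alone. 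The main (though modest) obstacle is bookkeeping rather than analysis: at each step one must keep every constant expressed through the allowed data $C,\bar C,\mu_R,t_1$ so that the final modulus is genuinely uniform over the approximating family, and one must work throughout with the integral formulation so that mere boundedness and measurability of $R$ in $t$ suffices. The conceptual reason the argument succeeds is that the non-Lipschitz parameter dependence never enters the Grönwall loop, appearing only in the additive constant $a$, which is exactly why the linear equation behaves so much better than the full system (\ref{eq:jacobi_geodesic_equation_sys}).
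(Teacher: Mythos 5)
Your proposal is correct and takes essentially the same route as the paper: both work with the integral formulation, get the a priori bound $\|X\|\le C e^{\bar C t_1}$ via Gr\"onwall, and then run an Osgood/Gr\"onwall loop with the linear modulus $\mu(s)=\bar C s$ in which the non-Lipschitz dependence on $x_0$ enters only through the additive constant $a$, finishing with the trivial Lipschitz bound in $t$ and (for the H\"older case) the conversion of Lipschitz terms into $\alpha$-H\"older ones on bounded sets. The only cosmetic difference is that you handle the $(x_0,X_0)$-dependence in one combined estimate by splitting the integrand, whereas the paper splits the difference into three summands by the triangle inequality, treats the $x_0$-summand in detail, and defers the $X_0$-summand to the standard linear-ODE argument in the cited reference.
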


\begin{proof} For $t,t' \in [0,t_1]$, $x_0,x_0' \in U$ and initial conditions $X_0,X_0'$ with $||X_0||, ||X_0'|| < C$ we estimate the difference $X(t,x_0,X_0)-X(t',x_0',X_0')$:
\begin{equation}\label{eq:two_terms}
\begin{split}
  \left\| X(t,x_0,X_0)-X(t',x_0',X_0')  \right\| \leq &\left\| X(t,x_0,X_0)-X(t',x_0,X_0)  \right\| \\
													&+\left\| X(t',x_0,X_0)-X(t',x_0',X_0)  \right\| \\
													&+ \left\| X(t',x_0',X_0)-X(t',x_0',X_0')  \right\|.
\end{split}
\end{equation}
Recall that $(\ref{eq:jacobi_geodesic_equation_sys2})$ locally (in $X_0$) yields a uniform bound $\tilde C$ on the solutions $X$ as in the proof of Lemma~\ref{lem:geo_lipschitz} via Grönwall's lemma. Then (\ref{eq:jacobi_geodesic_equation_sys2}) implies a local bound on the first summand in (\ref{eq:two_terms}) of the form $C_1|t-t'|$ for some constant $C_1>0$. Moreover, since the Jacobi equation is linear in $X$ a standard application of Osgood's lemma yields a uniform (in $t'$ and $x_0'$) modulus of continuity for the dependence on $X_0$ in the third summand \cite[Chapter~3.1]{BCD11}. We illustrate this application of Osgood's lemma by finding a bound for the second summand. For any $t \in [0,t_1]$ we have
\begin{equation*}
\begin{split}
&  \left\| X(t,x_0,X_0)-X(t,x_0',X_0)  \right\| \\
 & \leq\left\| \int_0^{t} \left(R(s,x_0)X(s,x_0,X_0) -R(s,x_0')X(s,x_0',X_0) \right) ds   \right\| \\
 &\leq  \int_0^{t}  \left\| R(s,x_0) \right\| \left\| X(s,x_0,X_0) - X(s,x'_0,X_0) \right\|  ds  \\
 & +  \int_0^{t} \left\| R(s,x_0) - R(s,x'_0) \right\|  \left\| X(s,x'_0,X_0)\right\| ds   \\
 &\leq  a + \int_0^{t}  \bar{C} \left\| X(s,x_0,X_0) - X(s,x'_0,X_0) \right\|  ds  \\ 
\end{split}
\end{equation*}
with $a=   \tilde C |t_1| \left\| R(\cdot,x_0) - R(\cdot,x'_0) \right\|_{[0,t_1]}$. 

For $a=0$ Osgood's lemma implies 
\[
X(t,x_0,X_0)=X(t,x_0',X_0)
\]
for all $t \in [0,t_1]$.

For $a>0$ Osgood's lemma yields $\int_a^{L} \frac{dr}{Cr} \leq |t_1|$ with $L=\left\| X(t_1,x_0,X_0) - X(t_1,x'_0,X_0) \right\|$. For $||x_0 -x_0'||\leq \delta$ we have $a \leq \tilde{C} |t_1| \mu_R(\delta)$ and hence $\int_{\tilde C |t_1|\mu_R(\delta)}^{L} \frac{dr}{Cr} \leq |t_1|$. We define $\Gamma:[0,\infty) \rightarrow [0,\infty)$ by $\Gamma(0)=0$ and $\int_{\tilde C |t_1|\mu_R(\delta)}^{\Gamma(\delta)} \frac{dr}{Cr} = |t_1|$. Then $\Gamma$ is a uniform (in $t \leq t_1$ and $X_0$ for $||X_0||<C$) modulus of continuity for $X$ depending on $x_0$ and it is explicitly given by 
\[
		\Gamma(\delta) = \tilde C |t_1| e^{C|t_1|} \mu_R(\delta).
\]
In total we find a desired modulus of continuity of $X$ for the dependence on $t$, $x_0$ and $X_0$, which completes the proof of the first claim.

If $R$ is $\alpha$-Hölder continuous for some $\alpha \in (0,1]$ with $\mathcal{C}^{0,\alpha}$-norm $\bar C$, then the modulus of continuity $\mu_R$ in the above proof can be chosen to be of the form $\mu_R(\delta)= \bar C \delta^{\alpha}$. In this case $\Gamma$ is proportional to $\delta^{\alpha}$ as well. The same argument yields a uniform $\alpha$-Hölder modulus of continuity for the dependence of $X(x_0,X_0)$ on $X_0$, cf. the third term of (\ref{eq:two_terms}). The dependence on $t$ is Lipschitz and hence also $\alpha$-Hölder. Again, in total we find a desired $\alpha$-Hölder modulus of continuity of $X$ for the dependence on $x_0$ and $X_0$.
\end{proof}

Now, since the $R_l(t,x_0) = R_l(x_l(t,x_0))$ are locally equicontinuous and equibounded an application of Lemma \ref{lem:c2_case}  completes the proof of Theorem \ref{thm:regularity_gf} according to the discussion preceding Lemma \ref{lem:osgood}.

\begin{rem}\label{rem:hoelder_case} Based on Lemma \ref{lem:c2_case} one can also show the Lipschitz resp. Hölder version of Theorem \ref{thm:regularity_gf} stated in the introduction. However, in the Hölder case $\alpha<1$ the claim does not localize as before as the composition of two $\alpha$-Hölder continuous functions is in general not $\alpha$-Hölder continuous anymore. Instead, one can work in a global immersed tubular neighborhood of a given geodesic.
\end{rem}

\end{document}